\DeclareMathOperator{\spann}{span}
\renewcommand\subsection{\@startsection {subsection}{2}{\z@}%
                {3.5ex \@plus -1ex \@minus -.2ex}%
                {1.5ex \@plus.2ex\noindent}%
                {\normalsize\bfseries\itshape}%
                }
\def\z{\mathfrak{z}}
\def\g{\mathfrak{g}}
\def\d{\mathfrak{d}}
\def\s{\mathfrak{s}}
\def\R{\mathbb{R}}
\def\ad{\operatorname{ad}}
\def\alt{\raise1pt\hbox{$\bigwedge$}}
\theoremstyle{plain}
\newtheorem{teo}{\bf Theorem}[section]
\newtheorem{prop}[teo]{\bf Proposition}
\theoremstyle{definition}
\newtheorem{ejemplo}[teo]{\bf Examples}
\theoremstyle{remark}
\newtheorem{rem}[teo]{\bf Remark}
\begin{document}

\begin{frontmatter}

\title{Triangular structures on flat Lie algebras}

\author{Amine Bahayou}
\ead[url]{amine.bahayou@gmail.com}


\begin{abstract}
In this work we study a large class of exact Lie bialgebras arising from noncommutative deformations of Poisson-Lie groups endowed with a left invariant Riemannian metric. We call these structures \emph{exact metaflat Lie bialgebras}. We give a complete classification of these structures. We show that given the metaflatness geometrical condition, these exact bialgebra structures arise necessarily from a nontrivial solution of the classical Yang-Baxter equation. Moreover, the dual Lie bialgebra is also flat and metaflat constituting an important kind of symmetry.
\end{abstract}

\begin{keyword}
Lie bialgebra, Poisson-Lie group, Yang-Baxter equation.
\end{keyword}

\end{frontmatter}

\section{Introduction}
Hawkins \cite{Ha1,Ha2} studied and introduced necessary conditions for the existence of a deformation of the graded algebra of differential forms $\Omega^\ast(M)$ on a Riemannian manifold $(M,g)$. He showed that the deduced Poisson bracket on the algebra of functions $C^\infty(M)$ extends to all forms and verifies the graded Jacobi identity if (\cite{Ha2}, Theorem 2.3, page 393):
\begin{enumerate}
\item The associated Levi-Civita contravariant connection $D$ is \emph{flat}.
\item The introduced \emph{metacurvature} tensor vanishes, ($D$ is \emph{metaflat}).
\end{enumerate}
We previously studied \cite{BB}, the so-called Hawkins conditions on a Poisson-Lie group endowed with a left invariant Riemannian metric, proved it is equivalent to (\cite{BB}, Theorem 3.1, page 446):
\begin{enumerate}
\item The dual Lie algebra $\g^\ast$ is flat, i.e. decomposes into an orthogonal sum $\g^\ast=S\oplus[\g^\ast,\g^\ast]$ (for the associated scalar product on $\g^\ast)$, where $S$ is an abelian subalgebra and the commutator ideal $[\g^\ast,\g^\ast]$ is even-dimensional and abelian.
\item The flat Lie bialgebra $(\g^\ast,\xi)$ is metaflat, i.e. for all $x,y\in S$ $\ad_x^2\xi(y)=0$, where $\ad_x$ is the extension of the adjoint representation to bivectors and $\xi$ is the $1$-cocycle defining the dual bialgebra structure of $\g^\ast$. (See the preliminary section for the definitions regarding Lie bialgbras). 
\end{enumerate}
The purpose of this paper is to show that any \textbf{exact} bialgebra structure on a flat (non abelian) Lie algebra arises necessarily from a nontrivial solution of the \textbf{classical} Yang-Baxter equation, provided the bialgebra is \textbf{metaflat}. We will show that the dual Lie bialgebra is also flat and metaflat when the starting Lie bialgebra is exact flat and metaflat.\par 
Let us describe the contents of the paper in more detail. In Section 2 we introduce the necessary notation and prove some preliminary results in Poisson geometry. In section 3, we present the main result of this paper, Theorem \ref{main}, and we discuss some examples in section 4.
\section{Preliminaries}\label{prelim}
For this section, we address the reader to the book \cite{V} where they can find all the material about Poisson geometry and Poisson-Lie groups. A Poisson structure on a smooth manifold $M$ is a Lie bracket $\{\cdot, \cdot\}$ on the space $C^{\infty}(M)$ of smooth functions on $M$ which satisfies the Leibniz rule. This bracket is called Poisson bracket and a manifold $M$ equipped with such a bracket is called Poisson manifold.
Therefore, a bivector field $\pi$ on $M$ such that the bracket
\[\{f,g\}:=\pi\left(df,dg\right)\]
is a Poisson bracket is called Poisson tensor or Poisson bivector field. A Poisson tensor can be regarded as a bundle map $\pi_{\sharp}: T^*M\to TM$:
\[\beta\left(\pi_{\sharp}(\alpha)\right)=\pi\left(\alpha,\beta\right).\]
A map $\phi: (M,\pi_M)\rightarrow (N,\pi_N)$ between two Poisson manifolds is called a Poisson map if for all $f,g\in C^{\infty}(N)$, one has
\begin{equation*}
\lbrace f\circ\phi, g\circ\phi\rbrace_M=\lbrace f,g\rbrace_N\circ \phi
\end{equation*}
\subsection{Lie bialgebras and Poisson Lie groups}
\label{sec:lie bialgebras}
A \emph{Poisson-Lie group} $(G,\pi)$ is a Lie group $G$ endowed with a Poisson tensor $\pi$ for which the multiplication $m : G \times G \rightarrow G$ is a Poisson map with respect to $\pi$ on $G$ and the product Poisson structure $\pi_{G\times G}=\pi\oplus\pi$ on $G\times G$. Equivalently, $\pi$ is multiplicative, i.e. for any $g,h\in G$\[\pi(gh)=(L_{g})_*\pi(h)+(R_{h})_*\pi(g),\]
where $(L_{g})_*$ (resp. $(R_{h})_*$) denotes the tangent map of the left translation of $G$ by $g$ (resp. the right translation of $G$ by $h$). \cite{LW}, page 503.\par 
The associated map $\pi_g:G\to\wedge^2\g$, defined by $\pi_r(g)=(R_{g^{-1}})_*\pi(g)$ satisfies
\[\pi_r(gh)=\pi_r(g)+\mathrm{Ad}_g\pi_r(h),\] 
which means that $\pi_r$ is a $1$-cocycle of $G$ with values in $\wedge^2\g$ for the Adjoint representation 
\[g\cdot\left(X\wedge Y\right):=\mathrm{Ad}_gX\wedge\mathrm{Ad}_gY.\]
The linear map $\xi : \g\to\wedge^2\g$ defined by $\xi(X)=\mathscr{L}_X\pi_r(e)$ is a $1$-cocycle with respect to the adjoint representation of $\g$ on $\g\wedge\g$
\[\ad_x(y\wedge z):=\ad_x y\wedge z+y\wedge\ad_x z.\]
which leads to the following definition:
A finite-dimensional Lie algebra $\g$ has a \emph{bialgebra structure} if it is endowed with a $1$-cocycle $\xi:\g\to\wedge^2\g$, i.e. for all $x,y$ in $\g$
\begin{equation}
\xi([x,y])=\ad_x\xi(y)-\ad_y\xi(x),
\label{1cocycle}
\end{equation}
and the transpose map $\xi^t=[\cdot,\cdot]_*:\g^\ast\wedge\g^\ast \to\g^\ast$ defined by
\[\langle\xi^t(\alpha\wedge\beta),x\rangle_{\g^\ast\times\g}:=\langle\alpha\wedge\beta,\xi(x)\rangle_{\wedge^2\g^\ast\times\wedge^2\g}\]
is a Lie bracket on $\g^\ast$.\\
It is important to note that a Lie bialgebra and its dual play symmetric roles: for any Lie bialgebra $(\g,[\cdot,\cdot],\xi)$ there exists a dual Lie bialgebra $(\g^\ast,[\cdot,\cdot]_*,\xi^*)$ where the Lie algebra structure on $\g^\ast$ is defined by $\xi^t=[\cdot,\cdot]_*$, and the dual $1$-cocycle $\xi^*$ is given by dualizing the Lie bracket in $\g$, i.e. 
\[\xi^*(\alpha)\left(x\wedge y\right)=\alpha\left([x,y]\right)\ \text{for all}\ \alpha\in\g^\ast\ \text{and all}\ x,y\in\g.\]
More details on Lie bialgebras can be found in \cite{YKS}.
\subsection{Contravariant connections and Metacurvature}
Let $(M,\pi)$ be a Poisson manifold. The associated Koszul Lie bracket on $1$-forms is defined by (\cite{CFM}, page 28)
\[[\alpha,\beta]_\pi=\mathscr{L}_{\pi_\sharp(\alpha)}\beta-\mathscr{L}_{\pi_\sharp(\beta)}\alpha-d\left(\pi(\alpha,\beta)\right),\]
where $\pi_\sharp$ is the \emph{ancor map} $\pi_\sharp : T^\ast M\to TM$, $\beta\left(\pi_\sharp(\alpha)\right):=\pi(\alpha,\beta)$.\\
A contravariant connection on a $(M,\pi)$ is an $\mathbb{R}$-bilinear map $D:\Omega^1(M)\times\Omega^1(M)\to\Omega^1(M)$, $(\alpha,\beta)\mapsto D_\alpha\beta$ such that (\cite{BB}, page 444):
\begin{enumerate} 
\item $D_\alpha\beta$ is linear over $C^\infty(M)$ with respect to $\alpha$, that is
\[D_{f\alpha}\beta=fD_\alpha\beta\ \text{for all}\ f\in C^\infty(M),\]
\item $D$ satisfies the product rule
\[D_\alpha(f\beta)=fD_\alpha\beta+\pi_\sharp(\alpha)(f)\beta\ \text{for all}\ f\in
C^\infty(M).\]
\end{enumerate}
The torsion and the curvature of a contravariant connection $D$ are formally identical to the usual definitions:
\begin{align*}
T(\alpha,\beta)=& D_\alpha\beta-D_\beta\alpha-[\alpha,\beta]_\pi\\
R(\alpha,\beta)=& D_\alpha D_\beta-D_\beta D_\alpha- D_{[\alpha,\beta]_\pi}\ \text{for all}\ \alpha,\beta\in\Omega^1(M).
\end{align*}
We will call $D$ torsion-free (resp. flat) if $T$ (resp. $R$) vanishes identically.\\
The Levi-Civita contravariant connection associated naturally to
$(\pi,\langle\,,\rangle)$ can defined by the Koszul formula
\begin{align*}
2\langle D_\alpha\beta,\gamma\rangle=&\ \pi_{\#}(\alpha)\cdot\langle\beta,\gamma\rangle+\pi_{\#}(\beta)\cdot\langle \alpha,\gamma\rangle-
\pi_{\#}(\gamma)\cdot\langle\alpha,\beta\rangle\\
&\ +\langle[\gamma,\alpha]_\pi,\beta\rangle+\langle[\gamma,\beta]_\pi,\alpha\rangle+\langle[\alpha,\beta]_\pi,\gamma\rangle.
\end{align*}
\subsection{Metacurvature}\label{meta}
Let $(M,\pi)$ be a Poisson manifold and $D$ a torsion-free and flat contravariant connection with respect to $\pi$. Then, there exists a bracket $\{\;,\;\}$ on the differential graded algebra of
differential forms $\Omega^*(M)$ such that (\cite{Ha2}, page 390): 
\begin{enumerate} 
\item
$\{\;,\;\}$ is $\mathbb{R}$-bilinear antisymmetric of degree $0$, i.e.
\[\{\sigma,\rho\}=-(-1)^{\deg\sigma\deg\rho}\{\rho,\sigma\}.\]
\item The differential $d$ is a derivation with respect to
$\{\;,\;\}$, i.e.
\[d\{\sigma,\rho\}=\{d\sigma,\rho\}+(-1)^{\deg\sigma}\{\sigma,d\rho\}.\]
\item $\{\;,\;\}$ satisfies the product rule
\[\{\sigma,\rho\wedge\lambda\}=\{\sigma,\rho\}\wedge\lambda+(-1)^{\deg\sigma
\deg\rho}\rho\wedge\{\sigma,\lambda\}.\]
\item For any $f,h\in
C^\infty(M)$ and for any $\sigma\in\Omega^*(P)$ the bracket $\{f,g\}$
coincides with the initial Poisson bracket and
\[\{f,\sigma\}=D_{df}\sigma.\]
\end{enumerate}
Hawkins called this bracket a generalized Poisson bracket and showed that there exists a $(2,3)$-tensor $M$ such that the following assertions are equivalent:
\begin{enumerate}
\item The generalized Poisson bracket satisfies the graded Jacobi identity
\[\{\{\sigma,\rho\},\lambda\}=\{\sigma,\{\rho,\lambda\}\}
-(-1)^{\deg\sigma\deg\rho}\{\rho,\{\sigma,\lambda\}\}.\]
\item The tensor $M$ vanishes identically.
\end{enumerate}
$M$ is called the metacurvature and is given by
\[M(df,\alpha,\beta)=\{f,\{\alpha,\beta\}\}-\{\{f,\alpha\},\beta\}-
\{\{f,\beta\},\alpha\}.\]
Hawkins pointed out in \cite{Ha2} page 393, that for any parallel $1$-form $\alpha$ and any $1$-form $\beta$, the generalized Poisson bracket of $\alpha$ and $\beta$ is given by
\[\{\alpha,\beta\}=-D_{\beta}d\alpha.\]
Then, one can deduce that for any parallel $1$-form $\alpha$ and for any forms $\beta,\gamma$ we have
\[M(\alpha,\beta,\gamma)=-D_\beta D_\gamma d\alpha.\]
\subsection{Flat Lie algebras}
A Lie algebra $\g$ (real of finite dimension) is said to be \emph{flat} if it is endowed with a positive definite scalar product $\langle\,,\rangle$ for which the associated infinitesimal Levi-Civita connection, defined for all $x,y,z$ in $\g$ by
\begin{equation}
2\langle\nabla_xy,z\rangle=\langle[x,y],z\rangle+\langle[z,x],y\rangle+\langle[z,y],x\rangle
\end{equation}
has zero curvature
\begin{equation}
R(x,y,z)=\nabla_{[x,y]}z-\left(\nabla_x\nabla_yz-\nabla_y\nabla_xz\right)\equiv0.\label{flat}
\end{equation}
In other words, the associated Lie group $G$ of $\g$, endowed with the unique left invariant Riemannian metric extending $\langle\,,\rangle$ has its Levi-Civita connection flat.\par 
Milnor in \cite{MIL} characterized these flat Lie algebras. Some refinements have been provided later in \cite{BB} and \cite{BDF}.
\begin{prop}[\cite{BB}, \cite{BDF}, \cite{MIL}]\label{flat}
Let $(\g,\langle\,,\rangle)$ be a flat Lie algebra. Then
$\g$ decomposes orthogonally as 
\[\g=\s\oplus\z\oplus\d,\] 
where $\z$ is the center of $\g$, $\s$ is an abelian Lie subalgebra, $\d$ is the commutator ideal satisfying the following conditions:
\begin{itemize}
\item $\d$ is abelian and even dimensional,
\item $\ad_x=\nabla_x$, for any $x$ in $\z\oplus\s$.
\end{itemize}
\end{prop}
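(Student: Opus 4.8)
The plan is to follow Milnor's analysis of flat left invariant metrics \cite{MIL}, while keeping track of the finer splitting into $\s,\z,\d$. Write $\nabla$ for the infinitesimal Levi-Civita connection. Two identities will be used constantly: torsion-freeness, $\nabla_xy-\nabla_yx=[x,y]$, and compatibility with the metric, which — since $\langle\,,\rangle$ is constant on left invariant fields — says that every operator $\nabla_x\colon\g\to\g$ is skew-adjoint. Flatness, $R\equiv0$, is equivalent to $\nabla_{[x,y]}=[\nabla_x,\nabla_y]$, so $x\mapsto\nabla_x$ is a homomorphism from $\g$ into the Lie algebra of skew-adjoint endomorphisms of $(\g,\langle\,,\rangle)$.

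The core step — and the one I expect to be the main obstacle — is to show that $\d:=[\g,\g]$ is an abelian ideal and that $\ad_u$ is skew-adjoint for every $u$ in $\mathfrak u:=\d^{\perp}$. This is precisely where one must use the vanishing of the full sectional curvature rather than only of the Ricci or scalar curvature: evaluating Milnor's curvature formula on an orthonormal pair $x,y$ expresses the sectional curvature through the structure constants and the symmetric parts $\tfrac12(\ad_x+\ad_x^{\ast})$, and requiring it to vanish on every $2$-plane degenerates those symmetric parts enough to read off commutativity of $[\g,\g]$ and skew-adjointness of $\ad$ on $\d^{\perp}$. I would either reproduce this argument from (\cite{MIL}, Theorem~1.5) or cite it, together with the sharpenings of \cite{BB,BDF}.

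Granting the core step, the rest is formal. Since $\d$ is an ideal and each $\ad_u$ ($u\in\mathfrak u$) is skew-adjoint, $\langle\ad_u(\mathfrak u),\d\rangle=-\langle\mathfrak u,\ad_u(\d)\rangle=0$, so $\mathfrak u$ is $\ad_u$-invariant; hence $\mathfrak u$ is a subalgebra and $[\mathfrak u,\mathfrak u]\subseteq\d\cap\mathfrak u=\{0\}$, i.e.\ $\mathfrak u$ is abelian. A direct computation with the Koszul formula, using that $\mathfrak u$ and $\d$ are abelian and that $\ad_u$ is skew-adjoint, then yields $\nabla_d=0$ for all $d\in\d$ and $\nabla_vu=0$ for all $v\in\g,\ u\in\mathfrak u$; in particular $\nabla_u=\ad_u$ on $\mathfrak u$. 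Next, if $x=u+d$ with $u\in\mathfrak u$, $d\in\d$ is central, then $\ad_u+\ad_d=0$; evaluating on $\mathfrak u$ (where $\ad_u$ takes values in $\mathfrak u$ and $\ad_d$ in $\d$) and on $\d$ (where $\ad_d=0$), and using $\mathfrak u\cap\d=\{0\}$, forces $\ad_u=0$ and $\ad_d=0$ separately, so $\z=(\z\cap\mathfrak u)\oplus(\z\cap\d)$.

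Finally, set $\s:=(\z\cap\mathfrak u)^{\perp}\cap\mathfrak u$. Since $\mathfrak u,\d$ are abelian and $\z\cap\mathfrak u$ is central, $[\g,\g]=[\mathfrak u,\d]=[\s,\d]=\sum_{s\in\s}\operatorname{im}(\ad_s|_{\d})$, which, each $\ad_s|_{\d}$ being skew-adjoint, is the orthogonal complement in $\d$ of $\bigcap_{s\in\s}\ker(\ad_s|_{\d})$; and this last intersection equals $\z\cap\d$. As $[\g,\g]=\d$, we get $\z\cap\d=\{0\}$, hence $\z\subseteq\mathfrak u$, $\mathfrak u=\z\oplus\s$ with $\s$ abelian, and $\g=\s\oplus\z\oplus\d$ is the asserted orthogonal decomposition. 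Moreover $\{\ad_s|_{\d}:s\in\s\}$ is a family of commuting skew-adjoint operators with trivial common kernel, so orthogonally block-diagonalizing it simultaneously produces only $2\times2$ rotation blocks (a $1$-dimensional block would lie in the common kernel); hence $\dim\d$ is even. The remaining assertion, $\ad_x=\nabla_x$ for $x\in\z\oplus\s=\mathfrak u$, is the identity $\nabla_u=\ad_u$ established above.
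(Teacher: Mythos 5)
Your proposal is correct. Note that the paper itself offers no proof of this proposition: it is quoted from \cite{MIL}, \cite{BB}, \cite{BDF}, so there is no argument in the text to compare against line by line. What you do is reconstruct the missing derivation: you defer the one genuinely hard step (that $\d=[\g,\g]$ is abelian and that $\ad$ is skew-adjoint on $\d^{\perp}$, which requires the full sectional-curvature computation) to Milnor's Theorem~1.5, exactly as the paper implicitly does, and then you supply the formal consequences — that $\mathfrak u=\d^{\perp}$ is an abelian subalgebra, that $\nabla_d=0$ for $d\in\d$ and $\nabla_u=\ad_u$ for $u\in\mathfrak u$ via the Koszul formula, that $\z\cap\d=\{0\}$ (so $\z\subseteq\mathfrak u$ and $\mathfrak u=\z\oplus\s$), and that $\dim\d$ is even because the commuting skew-adjoint family $\{\ad_s|_{\d}\}$ has trivial common kernel and hence only $2\times2$ rotation blocks. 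I checked these steps (in particular the Koszul computations giving $\nabla_d=0$ and $\nabla_v u=0$, and the splitting of a central element across $\mathfrak u\oplus\d$) and they are sound; the only caveat is that your ``core step'' is stated for $\mathfrak u=[\g,\g]^{\perp}$ whereas Milnor's statement gives skew-adjointness of $\ad_b$ on the complement of his abelian \emph{ideal}, so strictly you need the one-line observation that the remaining part of $[\g,\g]^{\perp}$ is central — but that follows immediately from the invariance of $\d$ and $\d^{\perp}$ under the skew-adjoint operators $\ad_b$.
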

\begin{rem}\label{k_unimodular}
It follows from the above proposition that $\g$ is a unimodular $2$-step solvable Lie algebra, whose nilradical is given by $\z\oplus\d$.
\end{rem}
Moreover, from \cite{BB}, we have:
\[\g=\spann\{s_1,\ldots,s_{k_0},z_1,\ldots,z_{\ell_0}\}\oplus\spann\{d_1,\ldots,d_{2m}\}\]
where $\spann\{d_1,\ldots,d_{2m}\}$ is the commutator ideal of $\g$ which is abelian, $\spann\{z_1,\ldots,z_{\ell_0}\}$ its center (possibly trivial) and $\spann\{s_1,\ldots,s_{k_0}\}$ its abelian subalgebra such that $k_0\leq m$ and
\begin{equation*}
[s_i,d_{2j-1}]=\lambda_{ij}\,d_{2j},\  [s_i,d_{2j}]=-\lambda_{ij}\,d_{2j-1}\ \text{for all}\ i=1,\ldots,k_0,\,\text{and}\ j=1,\ldots,m.
\end{equation*}
Indeed, the family $\{\ad_s:  s \in \s\} \subseteq \frak{so}(2m)$ is an abelian subalgebra, then it is conjugate by an element in $SO(2m)$ to a subalgebra of the  maximal abelian subalgebra of $\frak{so}(2m) $
\[  \frak t ^m= \left\{  \begin{pmatrix} 0 & -\lambda_1 & & &  \\
	\lambda_1 &0 & & & \\
	&   &   \ddots &  & \\
	&  &  & 0& -\lambda_m \\
	&  &  & \lambda_m & 0
\end{pmatrix} ,\ \lambda_i \in \R  \right\}           \]
with respect to an \textbf{orthonormal basis} $\{d_1 , \ldots , d_{2m}\}$  of $\d$. Since $\ad_{s_k}:\d\to\d$ is real skew-symmetric, its nonzero eigenvalues are all pure imaginary and thus are of the form $i\lambda_{k1},-i\lambda_{k1},\ldots,i\lambda_{km}, -i\lambda_{km}$. The endomorphism $\ad_{s_k}$ can be written in the form $\ad_{s_k}=OJ(\lambda_{k_1},\ldots,\lambda_{km})O^t$, where $O$ is an orthogonal matrix and
\[J(\lambda_{k1},\ldots,\lambda_{km})= \begin{pmatrix}
	\begin{matrix}0 & \lambda_{k1} \\ -\lambda_{k1} & 0\end{matrix} &  0 & \cdots & 0 \\
	0 & \begin{matrix}0 & \lambda_{k2} \\ -\lambda_{k2} & 0\end{matrix} & & 0 \\
	\vdots & & \ddots & \vdots \\
	0 & 0 & \cdots & \begin{matrix}0 & \lambda_{km}\\ -\lambda_{km} & 0\end{matrix}
\end{pmatrix}.\]
Two flat Lie algebras $\g_1$ and $\g_2$ are isomorphic if there exists a map $\varphi : \g_1\to\g_2$ which is simultaneously an isomorphism of Lie algebras and of Euclidean spaces (i.e. an isometry).
\begin{ejemplo}~
\begin{enumerate}
\item Any commutative Lie algebra is flat.
\item The Lie algebra $\g=\spann\{s\}\oplus\spann\{d_1,d_2\}$ with the brackets
\[[s,d_1]=d_2,\ [s,d_2]=-d_1,\ [d_1,d_2]=0,\]
is the smallest non abelian flat Lie algebra.
\end{enumerate}
\end{ejemplo}
\subsection*{Nondegenerate flat Lie algebras}
A flat Lie algebra is \emph{nondegenerate} if, \textbf{there exists a basis} $\{s_1,\ldots,s_{k_0}\}$ of $\s$ such that
\begin{equation}\label{ndeg}
\text{for all}\ i,j\in\{1,\ldots,m\},\,i\neq j,\ \text{there exists}\ k\in\{1,\ldots,k_0\},\ \text{such that}\ \lambda_{kj}^2\neq\lambda_{ki}^2.
\end{equation}
Otherwise, the Lie algebra is \emph{degenerate}.
\begin{ejemplo}~
\begin{enumerate}
\item The flat Lie algebra $\g=\spann\{s\}\oplus\spann\{d_1,d_2,d_3,d_4\}$ with the brackets 
\[[s,d_1]=d_2,\ [s,d_2]=-d_1,\ [s,d_3]=\alpha d_4,\ [s,d_4]=-\alpha d_3,\ \alpha\neq 0,-1,1\]
is nondegenerate.
\item The flat Lie algebra $\g=\spann\{s_1,s_2\}\oplus\spann\{d_1,d_2,d_3,d_4\}$ with the brackets
\[[s_i,d_{2j-1}]=\delta_{ij}\,d_{2j},\,[s_i,d_{2j}]=-\delta_{ij}\,d_{2j-1},\ i,j=1,2,\]
where $\delta$ stands for the Kronecker symbol, is nondegenerate. Although for the basis $\{s_1+s_2,s_1-s_2\}$ of $\s$ the property \eqref{ndeg} is not satisfied.
\item The flat Lie algebra $\g=\spann\{s\}\oplus\spann\{d_1,\ldots,d_4\}$ with the brackets \[[s,d_1]=d_2,\,[s,d_2]=-d_1,\,[s,d_3]=d_4,\,[s,d_4]=-d_3,\]
is degenerate, since $\lambda_{12}=\lambda_{13}$ and \eqref{ndeg} is not satisfied for any other vector generating $\s$.
\end{enumerate}
\end{ejemplo}
In the following, we will deduce the necessary (and sufficient) condition for the degeneracy property to be satisfied regardless of the choice of a basis of $\s$.\par
Let $\{s_1,\dotsc,s_{k_0}\}$ be a fixed basis of $\s$ and let $\lambda_1,\dots,\lambda_m$ be linear forms on $\s$ such that $\lambda_i(s_k)=\lambda_{ki}$ for any $1\leq i\leq m$.
Denote by $C^\infty(\s)$ the algebra of smooth functions, endowed with the pointwise product. We have for any $1\leq i,j\leq m$ the quadratic form 
\[q_{ij}=\lambda_i^2-\lambda_j^2.\]
If $(s_1^*,\dots,s_{k_0}^*)$ is the dual basis of $\s^*$ then $\lambda_i = \sum_{k=1}^{k_0}\lambda_{ki}\,s_k^*$ and
\[\lambda_i^2=\sum_{k=1}^{k_0}\lambda_{ki}^2\,s_k^{\ast2}+2\sum_{1\leq p<q\leq k_0}\lambda_{pi}\lambda_{qi}\,s_p^*s_q^*.\]
The condition $\lambda_{ki}^2=\lambda_{kj}^2$ for all $1\leq k\leq k_0$ means
\[q_{ij}=2\sum_{1\leq p<q\leq k_0}\left(\lambda_{pi}\lambda_{qi}-\lambda_{pj}\lambda_{qj}\right)s_p^\ast s_q^*.\]
If $g\in\mathrm{End}(\s)$ then $gs_k=\sum_{i=1}^{k_0} a_{ik}s_i$ for some $a_{ik}\in\mathbb{R}$ and $s_i^*(gs_k)s_j^*(gs_k)=a_{ik}a_{jk}$ so 
\[q_{ij}(gs_k)=2\sum_{1\leq p<q\leq k_0}a_{pk}a_{qk}\left(\lambda_{pi}\lambda_{qi}-\lambda_{pj}\lambda_{qj}\right).\]
We will show that $q_{ij}=0$ given that $q_{ij}(gs_k)=0$ for all $g \in\mathrm{GL}(\s)$ and $1\leq k\leq k_0$. For any $1\leq p,q\leq k_0$ the endomorphism $\tau_{pq}$ of $V$ given by $\tau_{pq}(s_k)=s_k+\delta_{kp}s_q$ is invertible (where $\delta$ stands for the Kronecker symbol) and $\tau_{pq}(s_p)=s_p+s_q$. Hence $q_{ij}(s_p+s_q)=q_{ij}(\tau_{pq}s_p)=0$ for any $1\leq p,q\leq k_0$ (if $p=q$ then $q_{ij}(2s_p)=4q_{ij}(s_p) =0$).

Now let $\beta(u,v)=q_{ij}(u+v)-q_{ij}(u)-q_{ij}(v)$ be the symmetric bilinear form determined by $q_{ij}$. Then for any $1\leq p,q\leq k_0$ we have
\[\beta(s_p,s_q)=q_{ij}(s_p+s_q)-q_{ij}(s_p)-q_{ij}(s_q)=0.\]
By bilinearity we deduce that $\beta\equiv 0$ which shows that $q_{ij}(v)=\frac{1}{2}\beta(v,v)=0$ for any $v\in V$. Thus $q_{ij}$ is identically zero, i.e.
\[\lambda_{pi}\lambda_{qi}-\lambda_{pj}\lambda_{qj}=0\ \text{for any}\ 1\leq p<q\leq k_0.\]
Since $\lambda_{pi}=\varepsilon_{ij}^p\lambda_{pj}$ and $\lambda_{qi}=\varepsilon_{ij}^q\lambda_{qj}$ ($\varepsilon_{ij}^q=\pm1$, $\varepsilon_{ij}^q=\pm1$), then
\begin{align*}
\lambda_{pi}\lambda_{qi}-\lambda_{pj}\lambda_{qj}=0&\Longleftrightarrow\lambda_{pi}\lambda_{qi}\left(1-\varepsilon_{ij}^p\varepsilon_{ij}^q\right)=0.\\
&\Longleftrightarrow\varepsilon_{ij}^p=\varepsilon_{ij}^q=\pm1\ \text{or}\ \lambda_{pi}=\lambda_{pj}=0,\ \text{or}\ \lambda_{qi}=\lambda_{qj}=0.
\end{align*}
We deduce the following characterization. 
\begin{prop}\label{degen}
A flat Lie algebra $\g=\s\oplus\z\oplus\d$ is degenerate if and only if for any basis of $\s$ there is $1\leq i<j\leq m$ such that for all $k=1,\dotsc,k_0$, $\lambda_{ki}=\varepsilon_{ij}\lambda_{kj}$, with the property
\begin{equation}\label{deg}
\varepsilon_{ij}^k=\varepsilon_{ij}=\pm1\ \text{independently of}\ k.
\end{equation}
\end{prop}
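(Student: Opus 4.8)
The plan is to reduce the statement to a basis-free dichotomy about a finite family of quadratic forms on $\s$. First I would fix an orthonormal basis $\{d_1,\dots,d_{2m}\}$ of $\d$ that simultaneously block-diagonalizes the whole commuting family $\{\ad_s:s\in\s\}$, as in the $J(\lambda)$ normal form above; this one basis works for \emph{every} basis of $\s$, since the $\ad_{s_k}$ span the same family of operators whatever basis $\{s_1,\dots,s_{k_0}\}$ of $\s$ we pick. It produces linear forms $\lambda_1,\dots,\lambda_m\in\s^\ast$, canonical up to a permutation and sign changes, with $\lambda_{ki}=\lambda_i(s_k)$ in any basis of $\s$. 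I would then set $q_{ij}=\lambda_i^2-\lambda_j^2$ for $i\neq j$, a quadratic form on $\s$, i.e.\ an element of the polynomial ring $\R[s_1^\ast,\dots,s_{k_0}^\ast]$. Unwinding Milnor's normal form and \eqref{ndeg}, a basis $(v_1,\dots,v_{k_0})$ of $\s$ fails \eqref{ndeg} exactly when some pair $\{i,j\}$ is a \emph{coincidence pair} for it, meaning $q_{ij}(v_\ell)=0$ for all $\ell$; hence $\g$ is degenerate iff every basis of $\s$ admits a coincidence pair. So it is enough to prove: every basis of $\s$ admits a coincidence pair $\iff$ $q_{ij}\equiv 0$ for some $\{i,j\}$.

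The direction $\Leftarrow$ is immediate, since $q_{ij}\equiv 0$ makes $\{i,j\}$ a coincidence pair for every basis. For $\Rightarrow$ I would argue by contraposition: suppose no $q_{ij}$ vanishes identically, and exhibit a single basis with no coincidence pair. Each zero set $Z_{ij}=\{v\in\s:q_{ij}(v)=0\}$ is then a proper algebraic subset of the real vector space $\s$, and the finite union $\bigcup_{i<j}Z_{ij}$ cannot exhaust $\s$ (the zero set of a nonzero polynomial over the infinite field $\R$ has empty interior / Lebesgue measure zero, and a finite union of such sets is still proper). Picking $v_1\in\s\setminus\bigcup_{i<j}Z_{ij}$ and completing it to a basis $(v_1,\dots,v_{k_0})$, we get $q_{ij}(v_1)\neq 0$ for every pair, so this basis has no coincidence pair and $\g$ is nondegenerate. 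I expect this to be the crux: one generic vector simultaneously destroys all of the finitely many candidate coincidence pairs, and this is precisely where the finiteness of the index set and the infinitude of $\R$ are used to overcome the a priori possibility that different bases have different coincidence pairs. (An equivalent packaging: degeneracy says the Zariski-closed sets $\{g\in\mathrm{GL}(\s):q_{ij}(gs_k)=0\ \forall k\}$ cover the irreducible variety $\mathrm{GL}(\s)$, so one of them is everything, and then the substitutions $g=\tau_{pq}$ used above force $q_{ij}\equiv 0$.)

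Finally I would unpack $q_{ij}\equiv 0$. Since $\R[s_1^\ast,\dots,s_{k_0}^\ast]$ is an integral domain and $q_{ij}=(\lambda_i-\lambda_j)(\lambda_i+\lambda_j)$, its vanishing forces $\lambda_i=\lambda_j$ or $\lambda_i=-\lambda_j$, i.e.\ there is $\varepsilon_{ij}\in\{\pm1\}$ with $\lambda_{ki}=\varepsilon_{ij}\lambda_{kj}$ for all $k$, and this holds in \emph{every} basis of $\s$ with a basis-independent pair $\{i,j\}$ — which is a touch stronger than \eqref{deg}. (One could also read this off the expansion of $q_{ij}$ directly, as in the computation preceding the proposition: the diagonal terms give $\lambda_{ki}^2=\lambda_{kj}^2$, hence signs $\varepsilon^k_{ij}=\pm1$ on the support, and the off-diagonal terms $\lambda_{pi}\lambda_{qi}=\lambda_{pj}\lambda_{qj}$ force $\varepsilon^p_{ij}=\varepsilon^q_{ij}$ whenever both are defined.) Chaining the three steps gives the equivalence; the only remaining nuisance is the permutation/sign ambiguity in the $\lambda_i$, which merely relabels the pairs and leaves every condition above intact.
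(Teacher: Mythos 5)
Your proposal is correct, and at the decisive step it takes a genuinely different route from the paper. The paper fixes one pair $(i,j)$, assumes $q_{ij}(gs_k)=0$ for every $g\in\mathrm{GL}(\s)$ and every $k$, applies the shears $\tau_{pq}(s_k)=s_k+\delta_{kp}s_q$ and polarization to conclude $q_{ij}\equiv0$, and then compares coefficients ($\lambda_{pi}\lambda_{qi}=\lambda_{pj}\lambda_{qj}$) to extract the single sign $\varepsilon_{ij}$. You instead argue by contraposition with a genericity argument: if no $q_{ij}$ vanishes identically, then the finite union of the zero sets $Z_{ij}$ cannot exhaust $\s$, so a vector $v_1$ avoiding all of them exists, and any basis containing $v_1$ satisfies \eqref{ndeg}; you then get the uniform sign from the factorization $q_{ij}=(\lambda_i-\lambda_j)(\lambda_i+\lambda_j)$ in the integral domain $\R[s_1^\ast,\dots,s_{k_0}^\ast]$ rather than by coefficient comparison. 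The real payoff of your version is that it squarely addresses the quantifier subtlety that the paper's computation leaves implicit: degeneracy only supplies, for each basis (each $g$), \emph{some} coincidence pair possibly depending on that basis, whereas the paper's hypothesis is that one fixed pair works for all $g$; your generic-vector step (equivalently, the Zariski-covering remark you make in parentheses) is exactly what upgrades the former to the latter, and it also yields the slightly stronger conclusion that the pair can be chosen independently of the basis. What the paper's route buys in exchange is a completely explicit, elementary identity-based argument (shears plus polarization, no genericity or irreducibility), together with the explicit coefficient relations reused later in the metaflatness computation. One point worth stating explicitly in your write-up, since the whole reduction rests on it: the forms $\lambda_1,\dots,\lambda_m$ are canonical up to permutation and sign because $\pm i\lambda_i$ are the eigenvalue functions of the fixed commuting family $\{\ad_s : s\in\s\}$ acting on $\d$, so the quadratic forms $q_{ij}$ (up to relabeling) and hence the notion of coincidence pair do not depend on any choice of bases; you assert this correctly but it deserves a sentence of justification.
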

\subsection{Metaflat Lie algebras}
Let $(\g,\langle\,,\rangle,\xi)$ be a flat Lie algebra endowed with a $1$-cocycle $\xi : \g\to\wedge^2\g$.
The flat Lie algebra $(\g,\langle\,,\rangle,\xi)$ is \emph{metaflat} if
\begin{equation}\label{meta}
\ad_x\ad_y\xi(z)=0\ \text{for all}\ x,y,z\in\s.
\end{equation}
This is equivalent to the nullity of the metacurvature \ref{meta}. This has been proved in \cite{BB}.\\
A commutative Lie algebra is obviously flat and metaflat. In the sequel, we assume that the Lie algebra is not commutative.
\section{Main theorem}
In this section we prove the main theorem (i.e., Theorem \ref{main}).\\
Let $r\in\wedge^2\g$ be a bivector and let $\xi$ be the associated $1$-coboundary. We have
\begin{align*}
r=&\sum_{1\leq i<j\leq k_0}a_{ij}\,s_i\wedge s_j+\sum_{\substack{1\leq i\leq k_0\\ 1\leq j\leq\ell_0}}b_{ij}\,s_i\wedge z_j+\sum_{\substack{1\leq i\leq k_0\\ 1\leq j\leq m}}c_{ij}\,s_i\wedge d_{2j-1}+\sum_{\substack{1\leq i\leq k_0\\ 1\leq j\leq m}}e_{ij}\,s_i\wedge d_{2j}\\
&+\sum_{1\leq i<j\leq\ell_0}f_{ij}\,z_i\wedge z_j+\sum_{\substack{1\leq i\leq\ell_0\\ 1\leq j\leq m}}g_{ij}\,z_i\wedge d_{2j-1}+\sum_{\substack{1\leq i\leq\ell_0\\ 1\leq j\leq m}}h_{ij}\,z_i\wedge d_{2j}\\
&+\sum_{1\leq i<j\leq m}m_{ij}\,d_{2i-1}\wedge d_{2j-1}+\sum_{1\leq i,j\leq m}n_{ij}\,d_{2i-1}\wedge d_{2j}+\sum_{1\leq i<j\leq m}p_{ij}\,d_{2i}\wedge d_{2j}.
\end{align*}
For all $k=1,\ldots,k_0$, $\xi(s_k)=\ad_{s_k}r$, thus
\begin{align}
\xi(s_k)=&\sum_{\substack{1\leq i\leq k_0\\ 1\leq j\leq m}}-\lambda_{kj}e_{ij}\,s_i\wedge d_{2j-1}+\sum_{\substack{1\leq i\leq k_0\\ 1\leq j\leq m}}\lambda_{kj}c_{ij}\,s_i\wedge d_{2j}+\sum_{\substack{1\leq i\leq\ell_0\\ 1\leq j\leq m}}-\lambda_{kj}h_{ij}\,z_i\wedge d_{2j-1}\notag\\
&+\sum_{\substack{1\leq i\leq\ell_0\\ 1\leq j\leq m}}\lambda_{kj}g_{ij}\,z_i\wedge d_{2j}+\sum_{1\leq i<j\leq m}\left(-\lambda_{kj}n_{ij}+\lambda_{ki}n_{ji}\right)\,d_{2i-1}\wedge d_{2j-1}\notag\\
&+\sum_{1\leq i<j\leq m}\left(\lambda_{kj}m_{ij}-\lambda_{ki}p_{ij}\right)\,d_{2i-1}\wedge d_{2j}+\sum_{1\leq i<j\leq m}\left(-\lambda_{ki}m_{ij}+\lambda_{kj}p_{ij}\right)\,d_{2j-1}\wedge d_{2i}\notag\\
&+\sum_{1\leq i<j\leq m}\left(\lambda_{ki}n_{ij}-\lambda_{kj}n_{ji}\right)\,d_{2i}\wedge d_{2j}.\label{cocycle}
\end{align}
For all $k=1,\ldots,\ell_0$, $\xi(z_k)=\ad_{z_k}r=0$ and for all $k=1,\ldots,m$
\[\xi(d_{2k-1})=\ad_{d_{2k-1}}r=\Phi_k\wedge d_{2k},\ \xi(d_{2k})=\ad_{d_{2k}}r=d_{2k-1}\wedge\Phi_k\]
where 
\begin{align*}
\Phi_k=&\left(\sum_{j=2}^{k_0}-\lambda_{jk}a_{1j}\right)s_1+\sum_{p=2}^{k_0-1}\left(\sum_{i=1}^{p-1}\lambda_{ik}a_{ip}+\sum_{j=p+1}^{k_0}-\lambda_{jk}a_{pj}\right)s_p+\left(\sum_{i=1}^{k_0-1}\lambda_{ik}a_{ik_0}\right)s_{k_0}\\
&+\sum_{j=1}^{\ell_0}\left(\sum_{i=1}^{k_0}\lambda_{ik}b_{ij}\right)z_j+\sum_{j=1}^m\left(\sum_{i=1}^{k_0}\lambda_{ik}c_{ij}\right)d_{2j-1}+\sum_{j=1}^m\left(\sum_{i=1}^{k_0}\lambda_{ik}e_{ij}\right)d_{2j}.
\end{align*}
\begin{prop}
The \emph{metaflatness} condition \ref{meta} gives
\[\xi(s_k)=0\ \text{for all}\ k=1,\ldots,k_0.\]
\end{prop}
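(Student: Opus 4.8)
The plan is to deduce the whole statement from one instance of the metaflatness condition \eqref{meta}, the diagonal one $x=y=z=s_k$, together with skew-symmetry of $\ad_{s_k}$. Since $\xi$ is the coboundary of $r$ one has $\xi(s_k)=\ad_{s_k}r$, so this instance reads $\ad_{s_k}\ad_{s_k}\xi(s_k)=\ad_{s_k}^{3}r=0$ in $\wedge^2\g$, where throughout $\ad_{s_k}$ also denotes the derivation extension of $\ad_{s_k}:\g\to\g$.

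The first step is to note that flatness forces $\ad_{s_k}$ to be skew-symmetric. By Proposition \ref{flat} the operator $\ad_{s_k}$ annihilates $\s\oplus\z$ and restricts to a skew-symmetric endomorphism of $\d$; since the decomposition $\g=(\s\oplus\z)\oplus\d$ is orthogonal, $\ad_{s_k}$ is skew-symmetric on $(\g,\langle\,,\rangle)$. Hence its derivation extension is skew-symmetric for the induced inner product on $\wedge^2\g$ (a direct check from $\langle\ad_{s_k}u,v\rangle+\langle u,\ad_{s_k}v\rangle=0$), so it is normal, in particular semisimple, and $\ker\ad_{s_k}^{n}=\ker\ad_{s_k}$ for every $n\geq 1$. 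Concretely, $\ad_{s_k}^{2}w=0$ gives $0=\langle\ad_{s_k}^{2}w,w\rangle=-\|\ad_{s_k}w\|^{2}$, whence $\ad_{s_k}w=0$, and one iterates.

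The second step is the conclusion: applying $\ker\ad_{s_k}^{3}=\ker\ad_{s_k}$ to $\ad_{s_k}^{3}r=0$ yields $\ad_{s_k}r=0$, i.e. $\xi(s_k)=0$; as $k\in\{1,\dots,k_0\}$ was arbitrary the proof is complete.

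I do not expect a genuine obstacle. The only point that needs care is that the relevant $\ad_{s_k}$ carries no nilpotent part — this is exactly where the flatness of $\g$ enters, through skew-symmetry of $\ad_{s_k}|_\d$ — so that $\ad_{s_k}^{3}r=0$ can be upgraded to $\ad_{s_k}r=0$. If one prefers to avoid the structural language, the same conclusion follows directly from the explicit formula \eqref{cocycle}: applying $\ad_{s_k}^{2}$ to each monomial multiplies the coefficients attached to the plane $\spann\{d_{2j-1},d_{2j}\}$ by $-\lambda_{kj}^{2}$ and acts by a negative semidefinite symmetric operator on the $\wedge^2\d$-part, so that $\ad_{s_k}^{2}\xi(s_k)=0$ already forces every coefficient in \eqref{cocycle} to vanish; this route works but requires tracking the rotation blocks in the $d_\bullet\wedge d_\bullet$ terms and is less transparent.
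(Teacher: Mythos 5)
Your argument is correct, and it proves the proposition by a genuinely different and more structural route than the paper. The paper's proof is a coefficient computation in the adapted basis: it expands $\xi(s_k)$, applies $\ad_{s_\ell}^2$, reads off a $2\times 2$ linear system on the $\wedge^2\d$--components, and then splits into the nondegenerate case (where the system is invertible) and the degenerate case (where the surviving coefficients are shown to cancel in the explicit formula for $\ad_{s_k}r$). You instead use only the diagonal instance $\ad_{s_k}^2\xi(s_k)=0$ of metaflatness together with exactness, $\xi(s_k)=\ad_{s_k}r$, so the hypothesis becomes $\ad_{s_k}^3 r=0$ in $\wedge^2\g$; since $\ad_{s_k}$ kills $\s\oplus\z$, preserves $\d$, and is skew-symmetric there with respect to the orthonormal basis $\{d_1,\dots,d_{2m}\}$, its derivation extension to $\wedge^2\g$ is skew-symmetric for the induced scalar product, and the identity $0=\langle\ad_{s_k}^2 w,w\rangle=-\|\ad_{s_k}w\|^2$ upgrades $\ad_{s_k}^3r=0$ to $\ad_{s_k}r=0$, i.e. $\xi(s_k)=0$. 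This is shorter, basis-free, needs no degenerate/nondegenerate dichotomy, and in fact shows the conclusion already follows from the weaker hypothesis $\ad_x^2\xi(x)=0$ for $x\in\s$; the exactness of $\xi$ is used essentially (for a general cocycle one could only conclude $\ad_{s_k}\xi(s_k)=0$). What the paper's computation buys in exchange is the explicit list of coefficient constraints, which is what feeds directly into the next proposition describing the admissible bivectors $r$ in the two cases; with your argument one would still perform that (easy) kernel computation afterwards. One small caution: in your closing alternative sketch, "$\ad_{s_k}^2\xi(s_k)=0$ forces every coefficient to vanish" is not literally true coefficientwise — it forces $\xi(s_k)\in\ker\ad_{s_k}$, and one must then invoke $\xi(s_k)\in\operatorname{im}\ad_{s_k}$ and the orthogonality $\operatorname{im}\ad_{s_k}\perp\ker\ad_{s_k}$ (or the kernel argument applied to $r$ as in your main text) to conclude; your primary argument already does this correctly.
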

\begin{proof}
Since $\xi$ is a coboundary, we can simply write
\begin{align*}
\xi(s_k)=&\sum_{\substack{1\leq i\leq k_0\\ 1\leq j\leq m}}c_{ij}^k\,s_i\wedge d_{2j-1}+\sum_{\substack{1\leq i\leq k_0\\ 1\leq j\leq m}}e_{ij}^k\,s_i\wedge d_{2j}+\sum_{\substack{1\leq i\leq\ell_0\\ 1\leq j\leq m}}g_{ij}^k\,z_i\wedge d_{2j-1}\\
&+\sum_{\substack{1\leq i\leq\ell_0\\ 1\leq j\leq m}}h_{ij}^k\,z_i\wedge d_{2j}+\sum_{1\leq i<j\leq m}m_{ij}^k\,d_{2i-1}\wedge d_{2j-1}\\
&+\sum_{1\leq i,j\leq m}n_{ij}^k\,d_{2i-1}\wedge d_{2j}+\sum_{1\leq i<j\leq m}p_{ij}^k\,d_{2i}\wedge d_{2j},
\end{align*}
and
\begin{align*}
\ad_{s_\ell}^2\xi(s_k)=&\sum_{\substack{1\leq i\leq k_0\\ 1\leq j\leq m}}-\lambda_{\ell j}^2 c_{ij}^k\,s_i\wedge d_{2j-1}+\sum_{\substack{1\leq i\leq k_0\\ 1\leq j\leq m}}-\lambda_{\ell j}^2 e_{ij}^k\,s_i\wedge d_{2j}\\
&+\sum_{\substack{1\leq i\leq\ell_0\\ 1\leq j\leq m}}-\lambda_{\ell j}^2 g_{ij}^k\,z_i\wedge d_{2j-1}+\sum_{\substack{1\leq i\leq\ell_0\\ 1\leq j\leq m}}-\lambda_{\ell j}^2 h_{ij}^k\,z_i\wedge d_{2j}\\
&+\sum_{1\leq i<j\leq m}\left(-(\lambda_{\ell i}^2+\lambda_{\ell j}^2)m_{ij}^k+2\lambda_{\ell i}\lambda_{\ell j}p_{ij}^k\right)\,d_{2i-1}\wedge d_{2j-1}\\
&+\sum_{1\leq i<j\leq m}\left(-(\lambda_{\ell i}^2+\lambda_{\ell j}^2)n_{ij}^k+2\lambda_{\ell i}\lambda_{\ell j}n_{ji}^k\right)\,d_{2i-1}\wedge d_{2j}\\
&+\sum_{1\leq i<j\leq m}\left(2\lambda_{\ell i}\lambda_{\ell j}n_{ij}^k-(\lambda_{\ell i}^2+\lambda_{\ell j}^2)n_{ji}^k\right)\,d_{2j-1}\wedge d_{2i}\\
&+\sum_{1\leq i<j\leq m}\left(2\lambda_{\ell i}\lambda_{\ell j}m_{ij}^k-(\lambda_{\ell i}^2+\lambda_{\ell j}^2)p_{ij}^k\right)\,d_{2i}\wedge d_{2j}.
\end{align*}
So that $\ad_{s_\ell}^2\xi(s_k)=0$ for all $k,\ell\in\{1,\ldots,k_0\}$ if and only if 
\[c_{ij}^k=e_{ij}^k=g_{ij}^k=h_{ij}^k=0\]
and
\begin{equation*}
\begin{pmatrix}
-(\lambda_{\ell i}^2+\lambda_{\ell j}^2)&\phantom{-}2\lambda_{\ell i}\lambda_{\ell j}\\
\phantom{-}2\lambda_{\ell i}\lambda_{\ell j}&-(\lambda_{\ell i}^2+\lambda_{\ell j}^2)
\end{pmatrix}\begin{pmatrix}
m_{ij}^k\\
p_{ij}^k
\end{pmatrix}=
\begin{pmatrix}
0\\
0
\end{pmatrix}=\begin{pmatrix}
-(\lambda_{\ell i}^2+\lambda_{\ell j}^2)&\phantom{-}2\lambda_{\ell i}\lambda_{\ell j}\\
\phantom{-}2\lambda_{\ell i}\lambda_{\ell j}&-(\lambda_{\ell i}^2+\lambda_{\ell j}^2)
\end{pmatrix}\begin{pmatrix}
n_{ij}^k\\
n_{ji}^k
\end{pmatrix}.
\end{equation*}
If $\g$ is non degenerate, i.e. for all $1\leq i<j\leq m$ there is $\ell\in\{1,\ldots,k_0\}$ such that  the determinant $(\lambda_{\ell i}^2-\lambda_{\ell j}^2)^2\neq0$, then
\[m_{ij}^k=p_{ij}^k=n_{ij}^k=n_{ji}^k=0,\ \text{and from}\ \eqref{cocycle}, \ \xi(s_k)=0.\]
Suppose $\g$ is degenerate, i.e. for all $\ell\in\{1,\ldots,k_0\}$ $\lambda_{\ell j}=\varepsilon_{ij}\lambda_{\ell i}$, ($\varepsilon_{ij}=\pm1$). We can choose $\ell$ such that $\lambda_{\ell i}\neq0$. Otherwise, $d_{2i}$ would be in the center, which is impossible. Then from the characterization \eqref{deg} of $\g$ we get, for all $1\leq i<j\leq m$, $p_{ij}^k=\varepsilon_{ij}m_{ij}^k$ and $n_{ji}^k=\varepsilon_{ij}n_{ij}^k$. By \eqref{cocycle}, we get the lemma, since for a degenerate flat algebra, we have
\[\lambda_{kj}n_{ij}-\lambda_{ki}n_{ji}=\left(\varepsilon_{ij}\lambda_{ki}\right)n_{ij}-\lambda_{ki}\left(\varepsilon_{ij}n_{ij}\right)=0\]
and similarly for the other coefficients.
\end{proof}
As a direct consequence, we get the following proposition:
\begin{prop}
Let $\g$ be a flat and metaflat Lie algebra and $r\in\wedge^2\g$.
\begin{enumerate}
\item If $\g$ is nondegenerate, then 
\begin{align*}
r=&\sum_{1\leq i<j\leq k_0}a_{ij}\,s_i\wedge s_j+\sum_{\substack{1\leq i\leq k_0\\ 1\leq j\leq\ell_0}}b_{ij}\,s_i\wedge z_j+\sum_{1\leq i<j\leq\ell_0}f_{ij}\,z_i\wedge z_j+\sum_{1\leq i\leq m}n_{ii}\,d_{2i-1}\wedge d_{2i}
\end{align*}
\item If $\g$ is degenerate then, 
\begin{align*}
r=&\sum_{1\leq i<j\leq k_0}a_{ij}\,s_i\wedge s_j+\sum_{\substack{1\leq i\leq k_0\\ 1\leq j\leq\ell_0}}b_{ij}\,s_i\wedge z_j+\sum_{1\leq i<j\leq\ell_0}f_{ij}\,z_i\wedge z_j\\
&+\sum_{1\leq i<j\leq m}m_{ij}\,d_{2i-1}\wedge d_{2j-1}+\sum_{1\leq i,j\leq m}n_{ij}\,d_{2i-1}\wedge d_{2j}+\sum_{1\leq i<j\leq m}p_{ij}\,d_{2i}\wedge d_{2j}
\end{align*}
such that $p_{ij}=\varepsilon_{ij}m_{ij}$ and $n_{ji}=\varepsilon_{ij} n_{ji}$ ($\varepsilon_{ij}=\pm1$).
\end{enumerate}
\end{prop}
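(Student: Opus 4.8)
The statement is a direct reading of the preceding proposition. The hypothesis of metaflatness of the coboundary structure $\xi=\partial r$ defined by $r$ lets us invoke that proposition: $\xi(s_k)=\ad_{s_k}r=0$ for every $k=1,\dots,k_0$. The plan is to substitute this into the explicit expansion \eqref{cocycle} of $\ad_{s_k}r$ in the distinguished basis, equate each coefficient to zero, and translate the resulting linear system back into conditions on the coefficients $a_{ij},b_{ij},\dots,p_{ij}$ of $r$. The one structural fact I will use throughout is that $\d$ contains no central vector, so for each $j\in\{1,\dots,m\}$ there is at least one $k$ with $\lambda_{kj}\neq 0$; this lets me pass from an identity $\lambda_{kj}(\text{expression})=0$ to the vanishing (or the announced relation) of that expression.

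First I would treat the mixed part. The terms $a_{ij}\,s_i\wedge s_j$, $b_{ij}\,s_i\wedge z_j$ and $f_{ij}\,z_i\wedge z_j$ simply do not occur in \eqref{cocycle}, because $\ad_{s_k}$ annihilates $\s\wedge\s$, $\s\wedge\z$ and $\z\wedge\z$ ($\s$ is abelian, $\z$ is central), so these coefficients remain free. For the remaining mixed bivectors, equating to zero the coefficients of $s_i\wedge d_{2j-1}$, $s_i\wedge d_{2j}$, $z_i\wedge d_{2j-1}$, $z_i\wedge d_{2j}$ in \eqref{cocycle} gives $\lambda_{kj}e_{ij}=\lambda_{kj}c_{ij}=\lambda_{kj}h_{ij}=\lambda_{kj}g_{ij}=0$ for all $i,j,k$; choosing for each $j$ an index $k$ with $\lambda_{kj}\neq 0$ yields $c_{ij}=e_{ij}=g_{ij}=h_{ij}=0$, so all $s_i\wedge d_\bullet$ and $z_i\wedge d_\bullet$ terms of $r$ disappear.

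Next I would deal with the $\d\wedge\d$ part, the only place where the two cases diverge. Note first that the diagonal terms $n_{ii}\,d_{2i-1}\wedge d_{2i}$ never appear in \eqref{cocycle} (one checks $\ad_{s_k}(d_{2i-1}\wedge d_{2i})=0$), so they are unconstrained. For $i<j$, equating to zero the coefficients of $d_{2i-1}\wedge d_{2j-1}$ and $d_{2i}\wedge d_{2j}$ gives $\lambda_{kj}n_{ij}=\lambda_{ki}n_{ji}$ and $\lambda_{ki}n_{ij}=\lambda_{kj}n_{ji}$, while those of $d_{2i-1}\wedge d_{2j}$ and $d_{2j-1}\wedge d_{2i}$ give $\lambda_{kj}m_{ij}=\lambda_{ki}p_{ij}$ and $\lambda_{ki}m_{ij}=\lambda_{kj}p_{ij}$; eliminating within each pair produces $(\lambda_{ki}^{2}-\lambda_{kj}^{2})n_{ij}=(\lambda_{ki}^{2}-\lambda_{kj}^{2})n_{ji}=(\lambda_{ki}^{2}-\lambda_{kj}^{2})m_{ij}=(\lambda_{ki}^{2}-\lambda_{kj}^{2})p_{ij}=0$ for every $k$. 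If $\g$ is nondegenerate, for each pair $i<j$ some $k$ makes $\lambda_{ki}^{2}\neq\lambda_{kj}^{2}$, hence $n_{ij}=n_{ji}=m_{ij}=p_{ij}=0$ for all $i<j$ and only the diagonal $n_{ii}$ survive; inserting the surviving coefficients into the general form of $r$ gives part (1). If $\g$ is degenerate, I would split the pairs $i<j$: for those with $\lambda_{ki}^{2}\neq\lambda_{kj}^{2}$ for some $k$ the argument above still kills the four coefficients, while for the remaining pairs Proposition~\ref{degen} furnishes a fixed sign $\varepsilon_{ij}=\pm1$ with $\lambda_{kj}=\varepsilon_{ij}\lambda_{ki}$ for all $k$; picking $k$ with $\lambda_{ki}\neq 0$ turns $\lambda_{kj}n_{ij}=\lambda_{ki}n_{ji}$ into $n_{ji}=\varepsilon_{ij}n_{ij}$ and $\lambda_{kj}m_{ij}=\lambda_{ki}p_{ij}$ into $p_{ij}=\varepsilon_{ij}m_{ij}$. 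Since coefficients that are forced to vanish trivially obey such relations, this is exactly part (2) (the displayed relation being read as $n_{ji}=\varepsilon_{ij}n_{ij}$).

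I expect the only real difficulty to be the bookkeeping inside \eqref{cocycle}: one must correctly pair each basis bivector with its coefficient — especially keeping $d_{2i-1}\wedge d_{2j}$ apart from $d_{2j-1}\wedge d_{2i}$ and remembering that, after antisymmetrization, $n_{ji}$ with $j>i$ is a bona fide coefficient of $r$ — so that the elimination is performed on the correct unknowns. Once the system is set up, everything follows from the preceding proposition, the noncentrality of $\d$, and Proposition~\ref{degen}.
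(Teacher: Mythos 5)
Your proposal is correct and is essentially the paper's own route: the paper states this proposition without a separate proof, as a ``direct consequence'' of the preceding one, and your argument (use $\xi(s_k)=\ad_{s_k}r=0$, read off the coefficients in the expansion \eqref{cocycle}, exploit that no $d_{2j}$ is central so some $\lambda_{kj}\neq0$, and treat the misprinted relation as $n_{ji}=\varepsilon_{ij}n_{ij}$) is exactly that consequence spelled out. The only slight imprecision is your appeal to Proposition~\ref{degen} to get a $k$-independent sign for every ``remaining'' pair: a pair with $\lambda_{ki}^2=\lambda_{kj}^2$ for all $k$ need not satisfy $\lambda_{kj}=\varepsilon_{ij}\lambda_{ki}$ with a fixed sign, but in that mixed-sign case the very equations you wrote ($\lambda_{kj}n_{ij}=\lambda_{ki}n_{ji}$, $\lambda_{ki}n_{ij}=\lambda_{kj}n_{ji}$ and their $m,p$ analogues, taken at two indices $k$ with opposite signs) force $m_{ij}=n_{ij}=n_{ji}=p_{ij}=0$, so the asserted relations hold trivially and the conclusion is unaffected (the paper glosses over the same point).
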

We can now state and prove the main theorem.
\begin{teo}\label{main}
Let $\g$ be a flat Lie algebra and let $\xi$ be a $1$-coboundary associated to some bivector $r\in\wedge^2\g$. If $(\g,\xi)$ is metaflat, then $r$ is a solution of the classical Yang-Baxter equation $[r,r]=0$. Moreover, the dual Lie bialgebra $\g^\ast$ is flat and metaflat for the dual positive-definite scalar product.
\end{teo}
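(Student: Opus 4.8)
The plan is to capitalize on the structural reduction already in hand. By the Proposition above, metaflatness of $(\g,\xi)$ forces $\xi(s_k)=\ad_{s_k}r=0$ for all $k$, so $r$ has no component along $\s\wedge\d$ or $\z\wedge\d$; write $r=r_1+r_2$ with $r_1\in\wedge^2(\s\oplus\z)$ and $r_2\in\wedge^2\d$. Moreover $r_2$ is invariant under $\ad_x$ for every $x\in\s\oplus\z$: for $x\in\z$ because $\z$ is central, and for $x\in\s$ because $\ad_x r=0$ while $\ad_x r_1=0$, the latter since $\s\oplus\z$ is abelian.

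To prove $[r,r]=0$ I would expand $[r,r]=[r_1,r_1]+2[r_1,r_2]+[r_2,r_2]$. The outer two Schouten brackets vanish because $\s\oplus\z$ and $\d$ are abelian subalgebras, so every Lie bracket occurring in them is zero. For the middle term, $r_1$ is a sum of decomposable bivectors $u\wedge v$ with $u,v\in\s\oplus\z$, and the Leibniz rule for the Schouten bracket gives $[u\wedge v,r_2]=u\wedge\ad_v r_2-(\ad_u r_2)\wedge v$, which is zero since $\ad_u r_2=\ad_v r_2=0$. Hence $[r_1,r_2]=0$ and $[r,r]=0$. This argument is uniform; in particular the case distinction between degenerate and nondegenerate $\g$ plays no role here.

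For the second assertion, $[r,r]=0$ makes $(\g,\xi)$ a genuine Lie bialgebra, so we may form the dual $(\g^*,[\,\cdot,\cdot\,]_*,\xi^*)$, and I would first compute $[\,\cdot,\cdot\,]_*$ on the orthonormal dual basis $\{s_p^*,z_q^*,d_c^*\}$. Since $\xi$ vanishes on $\s\oplus\z$, we get $[\g^*,\g^*]_*\subseteq\d^*:=\spann\{d_1^*,\ldots,d_{2m}^*\}$. Crucially, because the reductions of the previous propositions kill the $\s\wedge\d$ coefficients of $r$, each $\Phi_k$ lies in $\s\oplus\z$; writing $\Phi_k=\sum_p\alpha_{kp}s_p+\sum_q\beta_{kq}z_q$, one reads off from $\xi(d_{2k-1})=\Phi_k\wedge d_{2k}$ and $\xi(d_{2k})=d_{2k-1}\wedge\Phi_k$ that $\d^*$ is abelian, that $\spann\{s_p^*,z_q^*\}$ is an abelian subalgebra, and that $\ad'_{s_p^*}$ (resp. $\ad'_{z_q^*}$) acts on the plane $\spann\{d_{2k-1}^*,d_{2k}^*\}$ by the skew rotation of coefficient $\pm\alpha_{kp}$ (resp. $\pm\beta_{kq}$) and trivially on $\spann\{s_p^*,z_q^*\}$. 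Thus $\g^*$ is the orthogonal sum of the abelian subalgebra $\spann\{s_p^*,z_q^*\}$ and the abelian ideal $\d^*$, with $\ad'$ skew-symmetric on the first summand, so $\g^*$ is flat by Milnor's theorem, with canonical decomposition $\g^*=\s'\oplus\z'\oplus\d'$ where $\d'=[\g^*,\g^*]_*\subseteq\d^*$ and $\s'\subseteq\spann\{s_p^*,z_q^*\}$.

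Finally, metaflatness of $\g^*$ is then immediate: every Lie bracket $[x,y]_\g$ in the flat algebra $\g$ lies in $\d$, hence $\xi^*(\gamma)(x\wedge y)=\gamma([x,y]_\g)=0$ for all $\gamma\in\spann\{s_p^*,z_q^*\}$, so $\xi^*$ vanishes on $\s'$ and the condition $\ad'_\alpha\ad'_\beta\xi^*(\gamma)=0$ for $\alpha,\beta,\gamma\in\s'$ holds trivially. I expect the step needing the most care to be the inclusion $\s'\subseteq\spann\{s_p^*,z_q^*\}$ used above — equivalently, that $\d^*$ sits inside the center plus the commutator ideal of $\g^*$; this holds because each plane $\spann\{d_{2k-1}^*,d_{2k}^*\}$ is either entirely contained in $[\g^*,\g^*]_*$ (when some $\alpha_{kp}$ or $\beta_{kq}$ is nonzero) or entirely central, but it does require checking. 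Note too that $\xi^*$ need not be a coboundary on $\g^*$, so the metaflatness of the dual cannot be quoted from the earlier Proposition and has to be established directly, as above.
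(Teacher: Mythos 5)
Your argument is correct, and for the Yang--Baxter half it is genuinely different from the paper's. The paper proves $[r,r]=0$ by brute force: it expands $\tfrac12[r,r]$ in the basis, groups the resulting terms, and checks the cancellations separately in the nondegenerate case (where only the diagonal coefficients $n_{ii}$ survive) and in the degenerate case (using the relations $p_{k\ell}=\varepsilon_{k\ell}m_{k\ell}$, $n_{\ell k}=\varepsilon_{k\ell}n_{k\ell}$, $\Gamma_{ij}^{\ell}=\varepsilon_{k\ell}\Gamma_{ij}^{k}$). You instead isolate the structural content of the preceding propositions --- namely that $r=r_1+r_2$ with $r_1\in\wedge^2(\s\oplus\z)$, $r_2\in\wedge^2\d$, and $\ad_x r_2=0$ for all $x\in\s\oplus\z$ (the $\wedge^2\d$-component of $\ad_{s_k}r=\xi(s_k)=0$) --- and then kill $[r_1,r_1]$, $[r_2,r_2]$ by abelianity of $\s\oplus\z$ and $\d$ and $[r_1,r_2]$ by the Leibniz rule; this is shorter, basis-free, and makes transparent why the degenerate/nondegenerate dichotomy is irrelevant here, whereas the paper's computation has the side benefit of producing explicitly the vectors $\Phi_k$ that encode the dual structure constants. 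For the second half (flatness and metaflatness of $\g^\ast$) your route coincides with the paper's --- compute $[\cdot,\cdot]_*$ on the dual basis, observe the orthogonal splitting into the abelian subalgebra $\spann\{s_p^\ast,z_q^\ast\}$ and the abelian ideal $\spann\{d_c^\ast\}$ with skew-symmetric adjoint action, and note $\xi^\ast$ vanishes on $\spann\{s_p^\ast,z_q^\ast\}$ --- but you are actually more careful than the paper on the one delicate point: identifying the summand $\s'$ of the flat decomposition of $\g^\ast$ inside $\spann\{s_p^\ast,z_q^\ast\}$ (each plane $\spann\{d_{2k-1}^\ast,d_{2k}^\ast\}$ being either entirely in the commutator ideal or entirely central according to $\Phi_k\neq0$ or $\Phi_k=0$), and remarking that metaflatness of the dual must be checked directly since $\xi^\ast$ need not be exact.
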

\begin{proof}
In both the degenerate and nondegenerate cases, we can assume 
\begin{align*}
r=&\sum_{1\leq i<j\leq k_0}a_{ij}\,s_i\wedge s_j+\sum_{\substack{1\leq i\leq k_0\\ 1\leq j\leq\ell_0}}b_{ij}\,s_i\wedge z_j+\sum_{1\leq i<j\leq\ell_0}f_{ij}\,z_i\wedge z_j\\
&+\sum_{1\leq i<j\leq m}m_{ij}\,d_{2i-1}\wedge d_{2j-1}+\sum_{1\leq i,j\leq m}n_{ij}\,d_{2i-1}\wedge d_{2j}+\sum_{1\leq i<j\leq m}p_{ij}\,d_{2i}\wedge d_{2j}
\end{align*}
Recall that the Schouten-Nijenhuis bracket is $\mathbb{R}$-bilinear with respect to its two arguments and is defined decomposable mutlivectors as follows \cite{CFM}
\begin{multline*}
[X_1\wedge\cdots\wedge X_m,Y_1\wedge\cdots\wedge Y_n]=\\
\sum_{i,j}(-1)^{i+j}[X_i,Y_j]\wedge X_1\wedge\cdots\wedge \widehat{X_i}\wedge\cdots\wedge X_m\wedge Y_1\wedge\cdots\wedge\widehat{Y}_j\wedge\cdots\wedge Y_n
\end{multline*}
where the hat sign indicates that the argument below has been omitted.\par 
For all vector fields $X_1,X_2,Y_1,Y_2$ we have
\[[X_1\wedge X_2,Y_1\wedge Y_2]=[X_1,Y_1]\wedge X_2\wedge Y_2-[X_1,Y_2]\wedge X_2\wedge Y_1-[X_2,Y_1]\wedge X_1\wedge Y_2+[X_2,Y_2]\wedge X_1\wedge Y_1.\]
Moreover, the bracket is symmetric when restricted to bivectors
\[[X_1\wedge X_2,Y_1\wedge Y_2]=[Y_1\wedge Y_2,X_1\wedge X_2].\]
Therefore we have
\begin{align*}
\frac{1}{2}[r,r]=&\sum_{1\leq i<j\leq k_0}\sum_{1\leq k<\ell\leq m}a_{ij}m_{kl}\,[s_i\wedge s_j,d_{2k-1}\wedge d_{2\ell-1}]+\sum_{1\leq i<j\leq k_0}\sum_{1\leq k,\ell\leq m}a_{ij}n_{kl}\,[s_i\wedge s_j,d_{2k-1}\wedge d_{2\ell}]\\
&+\sum_{1\leq i<j\leq k_0}\sum_{1\leq k<\ell\leq m}a_{ij}p_{kl}\,[s_i\wedge s_j,d_{2k}\wedge d_{2\ell}]+\sum_{\substack{1\leq i\leq k_0\\ 1\leq j\leq\ell_0}}\sum_{1\leq k<\ell\leq m}b_{ij}m_{kl}\,[s_i\wedge z_j,d_{2k-1}\wedge d_{2\ell-1}]\\
&+\sum_{\substack{1\leq i\leq k_0\\ 1\leq j\leq\ell_0}}\sum_{1\leq k,\ell\leq m}b_{ij}n_{kl}\,[s_i\wedge z_j,d_{2k-1}\wedge d_{2\ell}]+\sum_{\substack{1\leq i\leq k_0\\ 1\leq j\leq\ell_0}}\sum_{1\leq k<\ell\leq m}b_{ij}m_{kl}\,[s_i\wedge z_j,d_{2k}\wedge d_{2\ell}].
\end{align*}
By using $\Gamma_{ij}^\ell=\lambda_{j\ell}s_i-\lambda_{i\ell}s_j$, we have
\begin{align*}
\frac{1}{2}[r,r]=&\sum_{1\leq i<j\leq k_0}\sum_{1\leq k<\ell\leq m}a_{ij}m_{kl}\left(\Gamma_{ij}^\ell\wedge d_{2k-1}\wedge d_{2\ell}-\Gamma_{ij}^k\wedge d_{2\ell-1}\wedge d_{2k}\right)\\
&+\sum_{1\leq i<j\leq k_0}\sum_{1\leq k,\ell\leq m}a_{ij}n_{kl}\left(-\Gamma_{ij}^\ell\wedge d_{2k-1}\wedge d_{2\ell-1}+\Gamma_{ij}^k\wedge d_{2k}\wedge d_{2\ell}\right)\\
&+\sum_{1\leq i<j\leq k_0}\sum_{1\leq k<\ell\leq m}a_{ij}p_{kl}\left(-\Gamma_{ij}^k\wedge d_{2k-1}\wedge d_{2\ell}+\Gamma_{ij}^\ell\wedge d_{2\ell-1}\wedge d_{2k}\right)\\
&+\sum_{\substack{1\leq i\leq k_0\\ 1\leq j\leq\ell_0}}\sum_{1\leq k<\ell\leq m}b_{ij}m_{kl}\,z_j\wedge\left(-\lambda_{i\ell}\,d_{2k-1}\wedge d_{2\ell}+\lambda_{ik}\,d_{2\ell-1}\wedge d_{2k}\right)\\
&+\sum_{\substack{1\leq i\leq k_0\\ 1\leq j\leq\ell_0}}\sum_{1\leq k,\ell\leq m}b_{ij}n_{kl}\,z_j\wedge\left(\lambda_{i\ell}\,d_{2k-1}\wedge d_{2\ell-1}-\lambda_{ik}\,d_{2k}\wedge d_{2\ell}\right)\\
&+\sum_{\substack{1\leq i\leq k_0\\ 1\leq j\leq\ell_0}}\sum_{1\leq k<\ell\leq m}b_{ij}p_{kl}\,z_j\wedge\left(\lambda_{ik}\,d_{2k-1}\wedge d_{2\ell}-\lambda_{i\ell}\,d_{2\ell-1}\wedge d_{2k}\right).
\end{align*}
Now, we show that the 2nd and the 5th terms are zero and by combining the terms 1st,4th with 3rd,6th respectively, we get $[r,r]=0$. We also show that above equality always holds, whether $\g$ is degenerate or not.
If $\g$ is nondegenerate, then
\begin{align*}
[r,r]=&\sum_{1\leq i<j\leq k_0}\sum_{1\leq k\leq m}a_{ij}n_{kk}\left(-\Gamma_{ij}^k\wedge d_{2k-1}\wedge d_{2k-1}+\Gamma_{ij}^k\wedge d_{2k}\wedge d_{2k}\right)\\
&+\sum_{\substack{1\leq i\leq k_0\\ 1\leq j\leq\ell_0}}\sum_{1\leq k\leq m}b_{ij}n_{kk}\,z_j\wedge\left(\lambda_{ik}\,d_{2k-1}\wedge d_{2k-1}-\lambda_{ik}\,d_{2k}\wedge d_{2k}\right)=0.
\end{align*}
If $\g$ is degenerate, then for all $k<\ell$ in $\{1,\ldots,m\}$ and for all $i,j$ in $\{1,\ldots,k_0\}$,
\begin{equation*}
\lambda_{i\ell}=\varepsilon_{k\ell}\,\lambda_{ik},\ \lambda_{j\ell}=\varepsilon_{k\ell}\,\lambda_{jk},\ p_{k\ell}=\varepsilon_{k\ell}\,m_{k\ell},\ n_{\ell k}=\varepsilon_{k\ell}\,n_{k\ell}.
\end{equation*}
Therefore, $\Gamma_{ij}^\ell=(\varepsilon_{k\ell}\,\lambda_{jk})\,s_i-(\varepsilon_{k\ell}\,\lambda_{ik})\,s_j=\varepsilon_{k\ell}\Gamma_{ij}^k$, so
\begin{equation*}
\begin{array}{rccc}
\phantom{-}m_{k\ell}\Gamma_{ij}^\ell-p_{k\ell}\Gamma_{ij}^k&=\phantom{-}m_{k\ell}(\varepsilon_{k\ell}\,\Gamma_{ij}^k)-(\varepsilon_{k\ell}\,m_{k\ell})\Gamma_{ij}^k&=&\ 0,\\
-m_{k\ell}\Gamma_{ij}^k+p_{k\ell}\Gamma_{ij}^\ell&=-m_{k\ell}\Gamma_{ij}^k+(\varepsilon_{k\ell}\,m_{k\ell})(\varepsilon_{k\ell}\Gamma_{ij}^k)&=&\ 0.
\end{array}
\end{equation*}
Thus the sum of the 1st term with the 3rd term is zero. For the same reasons, combining the other terms gives zero.\par 
Let $\g^\ast$ be the dual vector space with dual basis
\[\left\{s_1^\ast,\ldots,s_{k_0}^\ast,z_1^\ast,\ldots,z_{\ell_0},d_1^\ast,\ldots,d_{2m}^\ast\right\}.\]
The subspace $\spann\{s_1^\ast,\ldots,s_{k_0}^\ast,z_1^\ast,\ldots,z_{\ell_0}^\ast\}$ is a commutative subalgebra, $\spann\{d_1^\ast,\ldots,d_{2m}^\ast\}$ is the commutator ideal; it is abelian and satisfies the following equations
\begin{equation*}
\begin{array}{ll}
[s_i^\ast,d_{2j-1}^\ast]=-s_i^\ast\left(\Phi_j\right)\,d_{2j}^\ast,& [s_i^\ast,d_{2j}^\ast]=s_i^\ast\left(\Phi_j\right)\,d_{2j-1}^\ast,\\\relax
[z_i^\ast,d_{2j-1}^\ast]=-z_i^\ast\left(\Phi_j\right)\,d_{2j}^\ast,& [z_i^\ast,d_{2j}^\ast]=z_i^\ast\left(\Phi_j\right)\,d_{2j-1}^\ast,
\end{array}
\end{equation*}
where 
\begin{multline*}
\Phi_k=\left(\sum_{j=2}^{k_0}-\lambda_{jk}a_{1j}\right)s_1+\sum_{p=2}^{k_0-1}\left(\sum_{i=1}^{p-1}\lambda_{ik}a_{ip}+\sum_{j=p+1}^{k_0}-\lambda_{jk}a_{pj}\right)s_p\\
+\left(\sum_{i=1}^{k_0-1}\lambda_{ik}a_{ik_0}\right)s_{k_0}+\sum_{j=1}^{\ell_0}\left(\sum_{i=1}^{k_0}\lambda_{ik}b_{ij}\right)z_j.
\end{multline*}
We denote by $\langle\,,\rangle^\ast$ the positive-definite scalar product on $\g^\ast$ associated to $\langle\,,\rangle$, via the isomorphism
\begin{equation*}
\begin{array}{cccc}
\sharp :& \g&\rightarrow&\g^\ast\\
&x&\mapsto&\sharp(x)(y):=\langle x,y\rangle,
\end{array}
\end{equation*}
by setting, $\langle\sharp(x),\sharp(y)\rangle^\ast:=\langle x,y\rangle.$
We conclude that $\g^\ast$ decomposes orthogonally
\[\g^\ast=\spann\{s_1^\ast,\ldots,s_{k_0}^\ast\}\oplus\{z_1^\ast,\ldots,z_{\ell_0}^\ast\}\oplus\spann\{d_1^\ast,\ldots,d_{2m}^\ast\},\]
i.e. $\g^\ast$ is flat. Since $\xi^\ast(x)=0$ for all $x$ in $\spann\{s_1^\ast,\ldots,s_{k_0}^\ast,z_1^\ast,\ldots,z_{\ell_0}^\ast\}$ and for all $k=1,\ldots,m$
\[\xi^\ast(d_{2k-1}^\ast)=-\Psi_k\wedge d_{2k}^\ast,\ \xi^\ast(d_{2k}^\ast)=\Psi_k\wedge d_{2k-1}^\ast\ \text{where}\ \Psi_k=\sum_{i=1}^{k_0}\lambda_{ik}s_i^\ast,\]
then the dual Lie algebra $\g^\ast$ is metaflat.
\end{proof}
\begin{rem}
The dual Lie bialgebra is not necessarily exact. For example, if $\g$ is a noncommutative flat Lie algebra with a trivial cocycle $\xi\equiv0$ then the dual Lie algebra is commutative and the dual cocyle $\xi^\ast$ is not trivial (thus not exact).
\end{rem}
\section{Examples}
\begin{enumerate}
\item For the $3$-dimensional flat Lie algebra $\g=\spann\{s\}\oplus\spann\{d_1,d_2\}$, with the brackets 
\[[s,d_1]=d_2,\ [s,d_2]=-d_1,\ [d_1,d_2]=0,\]
any bivector $r=a\,s\wedge d_1+b\,s\wedge d_2+c\,d_1\wedge d_2$ is a solution of the Yang-Baxter equation since $[r,r]=2(a^2+b^2)\,s\wedge d_1\wedge d_2$ is $\ad$-invariant.
The associated $1$-coboundary is given by
\[\xi(s)=-b\,s\wedge d_1+a\,s\wedge d_2,\ \xi(d_1)=a\,d_1\wedge d_2,\ \xi(d_2)=b\,d_1\wedge d_2.\]
We have
\[\xi\ \text{metaflat}\Longleftrightarrow a=b=0\Longleftrightarrow[r,r]=0.\]
\item \textbf{Triangular flat Lie bialgebra which is not metaflat.} The $4$-dimensional flat Lie algebra $\g=\spann\{s\}\oplus\spann\{z\}\oplus \spann\{d_1,d_2\}$, with brackets $[s,d_1]=d_2$, $[s,d_2]=-d_1$, the bivector $r=z\wedge d_1$ is a solution of the classical Yang-Baxter equation but $\ad_s^2\xi(s)=-z\wedge d_2\neq0$.
\item For any flat Lie algebra $\g=\spann\{s_1,\dotsc,s_{k_0}\}\oplus\z\oplus\d$, the bivector 
\[r=\sum_{1\leq i<j\leq k_0}a_{ij}\,s_i\wedge s_j\]
 is a solution of the classical Yang-Baxter equation (since $\s$ is a commutative subalgebra) and the associated $1$-cocycle $\xi(x)=\ad_xr$ is metaflat. The dual Lie algebra $\g^\ast$ is metaflat.
\item For any flat Lie algebra $\g=\s\oplus\z\oplus\spann\{d_1,\dotsc,d_{2m}\}$, the bivector 
\[r=\sum_{i=1}^m\lambda_i\,d_{2i-1}\wedge d_{2i}\]
is a solution of the classical Yang-Baxter equation (since $\d$ is an abelian ideal) and the associated $1$-cocycle $\xi(x)=\ad_xr$ is metaflat and the dual Lie algebra $\g^\ast$ is abelian.
\end{enumerate}
\section*{Acknowledgement} The author wishes to thank the editor and the referee for their valuable comments which improved the presentation of the paper.


\begin{thebibliography}{99}
\bibitem{BB}{\sc A. Bahayou, M. Boucetta}, \emph{Metacurvature of Riemannian Poisson-Lie Groups}, Journal of Lie Theory 19 No. 3, 439-462, 2009.

\bibitem{BDF}{\sc M. L. Barberis, I. Dotti and A. Fino}, \emph{Hyper-K\"ahler quotients of solvable Lie groups}, J. Geom. Phys. 56, 691-711, 2006.

\bibitem{CFM}{\sc M. Crainic and R.L. Fernandes and I. Marcut}, \emph{Lectures on Poisson Geometry}, American Mathematical Societey 2021.

\bibitem{Ha1}{\sc E. Hawkins}, \emph{Noncommutative rigidity}, Commun. Math. Phys. 246, 211-235, 2004.

\bibitem{Ha2}{\sc E. Hawkins}, \emph{The structure of noncommutative deformations}, J. Diff. Geom. 77, 385-424, 2007.

\bibitem{YKS}{\sc Y. Kosmann-Schwarzbach}, \emph{Lie bialgebras, Poisson-Lie groups and dressing transformations. integrability of nonlinear Systems}. Springer, 104-170, 1996.

\bibitem{LW}{\sc J.H. Lu and A. Weinstein}, \emph{Poisson Lie groups, dressing transformations and Bruhat decompositions}. J. Differ. Geom. 31, 501-26, 1990.

\bibitem{MIL}{\sc J. Milnor}, \emph{Curvatures of left invariant metrics on Lie groups}, Adv. Math. 21, 293-329, 1976.

\bibitem{V}{\sc I. Vaisman}, \emph{Lectures on the Geometry of Poisson Manifolds}. Birkh{\"a}user Verlag, 1994. 
\end{thebibliography}
\end{document}